\newtheorem{thm}{Theorem}[section]
\newtheorem{cor}[thm]{Corollary}
\newtheorem{lem}[thm]{Lemma}
\newtheorem{prop}[thm]{Proposition}
\newtheorem{defn}[thm]{Definition}
\numberwithin{equation}{section}
\newcommand{\be}{\begin{equation}}
\newcommand{\ee}{\end{equation}}
\newcommand{\ben}{\begin{enumerate}}
\newcommand{\een}{\end{enumerate}}
\newcommand{\beq}{\begin{eqnarray}}
\newcommand{\eeq}{\end{eqnarray}}
\newcommand{\beqn}{\begin{eqnarray*}}
\newcommand{\eeqn}{\end{eqnarray*}}
\newcommand{\bpf}{\begin{proof}}
\newcommand{\epf}{\end{proof}}
\newcommand{\bl}{\begin{lem}}
\newcommand{\el}{\end{lem}}
\newcommand{\bp}{\begin{prop}}
\newcommand{\ep}{\end{prop}}
\newcommand{\bd}{\begin{defn}}
\newcommand{\ed}{\end{defn}}
\newcommand{\bt}{\begin{thm}}
\newcommand{\et}{\end{thm}}
\title{Complete Ricci solitons on Finsler manifolds}
\author{B. Bidabad\footnote{Corresponding author; bidabad@aut.ac.ir} \  and  M. Yar Ahmadi}
\date{\small Faculty of Mathematics and Computer Science\\
Amirkabir University of Technology (Tehran Polytechnic).}
\begin{document}
\maketitle
\begin{abstract}
The geometric flow theory and its applications turned into one of the most intensively developing branches of modern geometry.
Here, a brief introduction to Finslerian  Ricci flow and their self-similar solutions known as   Ricci solitons are given and some recent results are presented.
They are a generalization of  Einstein metrics and
 are previously developed by the present authors for Finsler manifolds.
In the present work, it is shown that a complete shrinking Ricci soliton Finsler manifold has a finite fundamental group.
\end{abstract}
{\bf Keywords:} quasi-Einstein, shrinking, Finsler metric, Ricci soliton, Ricci flow.\\
{\bf AMS Subject Classification:} 53C60; 53C44.
\section{Introduction}
The Ricci flow, which is an evolution equation for a Riemannian metric in the set of all Riemannian metrics on a given manifold,
is an important example of a geometric flow.
One can find a geometric flow
which deforms a metric to a canonical metric, or at least one which can be used to deduce topological
information about the underlying manifold. However, general results of this type are only known in
low dimensions. Even then, it is not true that the Ricci flow deforms a metric on a three-manifold to a ``nice" metric, at least not without first passing through a number of surgeries.\\

\textbf{Case of Riemannian geometry;} In 1982 Hamilton introduced the notion of Ricci flow on Riemannian manifolds by the evolution equation
\begin{align}\label{RRF}
\frac{\partial}{\partial t}g_{ij}=-2Ric_{ij}, \quad g(t=0):= g_0.
\end{align}
The Ricci flow, which evolves a Riemannian metric by its Ricci curvature is a natural analog of the heat equation for metrics. In Hamilton's celebrated paper \cite{Ha}, it is shown that there is a unique solution to the Ricci flow for an arbitrary smooth Riemannian metric on a closed manifold over a sufficiently short time.

The concept of Ricci solitons was introduced by Hamilton in 1988, as self-similar solutions to the Ricci flow, see \cite{Ha1}. They are natural generalizations of Einstein metrics called also Ricci solitons and are subject to a great interest in geometry and physics especially in relation to string theory.

J. Lott has shown that the fundamental group of a closed manifold $M$ is finite for any gradient shrinking Ricci soliton, see \cite{Lott}.
As a result of A. Derdzinski,
every compact shrinking Ricci soliton has only finitely many free homotopic classes of closed curves in $M$ that  are in a bijective correspondence with the conjugacy classes in the fundamental group of $M$, see \cite{Derdz}. M. F. L\'{o}pez and E. G. R\'{i}o have proved that a compact shrinking Ricci soliton has finite fundamental group \cite{FG}. Moreover, Wylie has shown that a complete shrinking Ricci soliton has finite fundamental group \cite{Wylie}. An extension of similar results is obtained for Yamabe solitons by the present authors in \cite{YB3}. \\

\textbf{Case of Finsler geometry;} The concept of Ricci flow on Finsler manifolds is defined first by D. Bao,  \cite{bao}, choosing the Ricci tensor introduced by H. Akbar-Zadeh, see for instance \cite{Ak3}. Recently, the existence and uniqueness of solutions to the Ricci flow on Finsler surfaces are shown by the first present author in a joint work, cf. \cite{BS}.
 Ricci solitons in Finsler manifolds, as a generalization of Einstein spaces, are introduced by the present authors and it is shown that if there is a Ricci soliton on a compact Finsler manifold then there exists a solution to the Finslerian Ricci flow equation and vice-versa, see \cite{BY}.
Next,  a Bonnet-Myers type theorem was studied and it is proved that on a Finsler space, a forward complete shrinking Ricci soliton space is compact if and only if the corresponding vector field is bounded. Moreover, it is proved that a compact shrinking Ricci soliton Finsler space has a finite fundamental group and hence the first de Rham cohomology group vanishes, see \cite{YB1}.\\

 In the present work,  the results on shrinking Ricci soliton Finsler spaces previously obtained in the compact case by the present authors  \cite{YB1}, are extended for geodesically complete spaces, which extends also a result of Wylie, cf. \cite{Wylie}. The following theorem applies to a more general class of Finsler manifolds than Ricci solitons.
  \begin{thm}\label{theorem1}
 Let $(M,F)$ be a  geodesically complete Finsler manifold satisfying
\begin{align}\label{Eq;a}
2F^2\mathcal{R}ic+\mathcal{L}_{\hat{V}}{F^2}\geqslant 2 \lambda F^2,
\end{align}
where,  $\lambda > 0$. Then
  \begin{align}\label{theorem1bound}
  d(p,q)\leqslant \max\big\{1,\frac{1}{\lambda}\big(2(n-1)+H_p+H_q+\Vert V\Vert_p+\Vert V\Vert_q\big)\big\},
  \end{align}
  where $\Vert V\Vert_p$ denotes the``length" of $V$ with respect to $F$.
\end{thm}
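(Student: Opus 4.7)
The plan is to adapt Wylie's Riemannian shrinking-soliton argument to the Finsler setting, using the Finslerian second variation of arc length along a minimizing geodesic with the hypothesis (\ref{Eq;a}) playing the role of the soliton equation. Assume $\ell:=d(p,q)>1$; otherwise (\ref{theorem1bound}) is immediate. Fix a forward unit-speed minimizing geodesic $\gamma:[0,\ell]\to M$ from $p$ to $q$ and choose along it a $g_{\dot\gamma}$-orthonormal parallel frame $\{E_1,\dots,E_{n-1},E_n=\dot\gamma\}$ with respect to Chern-type parallel transport referenced at $\dot\gamma$.

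Next, pick a piecewise-linear cutoff $\phi:[0,\ell]\to[0,1]$ with $\phi(0)=\phi(\ell)=0$, $\phi\equiv 1$ on $[1,\ell-1]$, and linear on $[0,1]$ and on $[\ell-1,\ell]$. Apply the second variation formula to the variations $W_i(t)=\phi(t)E_i(t)$ for $i=1,\dots,n-1$ and sum the index inequalities $I(W_i,W_i)\geq 0$ coming from minimality of $\gamma$. Gathering the Finsler-specific endpoint corrections into the notation $H_p, H_q$, the sum should reduce to
\begin{equation*}
\int_0^\ell (n-1)(\phi')^2\,dt \;-\; \int_0^\ell \phi^2\,F^2(\dot\gamma)\,\mathcal{R}ic(\dot\gamma)\,dt \;+\; H_p+H_q \;\geq\; 0.
\end{equation*}

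Substituting from (\ref{Eq;a}) yields $F^2\mathcal{R}ic\geq \lambda F^2-\tfrac12\mathcal L_{\hat V}(F^2)$ pointwise along $\gamma$. Along $\gamma$, where $F(\dot\gamma)\equiv 1$, the term $\tfrac12\mathcal L_{\hat V}(F^2)(\dot\gamma)$ can be written as the total $t$-derivative of $g_{\dot\gamma}(V,\dot\gamma)$; integrating $\int\phi^2\cdot\tfrac12\mathcal L_{\hat V}(F^2)\,dt$ by parts transfers the derivative onto $\phi$, and because $\phi$ vanishes at both endpoints no interior boundary terms survive. On the middle interval $(\phi')^2=0$, while on each end interval $|\phi'|=1$ and $|g_{\dot\gamma}(V,\dot\gamma)|\leq \|V\|$, with $\|V\|$ along a unit-length segment controlled by $\|V\|_p$, resp.\ $\|V\|_q$. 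Combining these estimates produces $\lambda\ell\leq 2(n-1)+H_p+H_q+\|V\|_p+\|V\|_q$, which together with the trivial bound for $\ell\leq 1$ gives (\ref{theorem1bound}).

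The main obstacle is not the cutoff estimate itself but its Finslerian implementation: parallel transport along $\gamma$ is tied to the reference direction $\dot\gamma$, and the Finsler second variation contains Cartan-torsion and nonlinear-connection curvature contributions absent in the Riemannian case. Showing that these contributions localize at $p$ and $q$ into the terms $H_p, H_q$, and that $\tfrac12\mathcal L_{\hat V}(F^2)$ along a geodesic admits the required total-derivative form, are the two technical points where the Akbar-Zadeh formalism is essential. Once they are in place, the cutoff estimate proceeds in direct parallel with the Riemannian argument.
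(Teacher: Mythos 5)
Your overall strategy --- second variation along a minimizing geodesic with a piecewise--linear cutoff, combined with rewriting $\mathcal{L}_{\hat V}F^2$ as a total derivative along $\gamma$ --- is the same as the paper's, but the two steps you defer as ``technical points'' are precisely where your version goes wrong, and one of them would not deliver the stated bound. First, $H_p$ and $H_q$ are not ``Finsler-specific endpoint corrections'' arising from Cartan-torsion or nonlinear-connection terms in the second variation. With a parallel $g$-orthonormal frame and $U=\phi E_i$, the Finslerian formula \eqref{secondVAL} closes up exactly as in the Riemannian case (the boundary term dies because $U=\phi E_i$ vanishes at the endpoints, and $\frac{\partial}{\partial s}g(U,T)=0$ by parallelism), yielding the clean inequality $0\leqslant\int_0^r\big[(n-1)(\phi')^2-\phi^2\mathcal{R}ic\big]ds$ with no extra terms. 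The quantities $H_p,H_q$ are by definition \eqref{Hpp} a priori bounds on $|\mathcal{R}ic|$ over the unit forward/backward balls about $p$ and $q$; they enter only when one upgrades $\int\phi^2\mathcal{R}ic$ to $\int\mathcal{R}ic$ via $\int_0^1(1-\phi^2)\mathcal{R}ic\,ds\leqslant H_p$ and $\int_{r-1}^r(1-\phi^2)\mathcal{R}ic\,ds\leqslant H_q$, giving the key lemma $\int_0^r\mathcal{R}ic\,ds\leqslant 2(n-1)+H_p+H_q$.

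Second, and more seriously, multiplying $\tfrac12\mathcal{L}_{\hat V}F^2=\frac{d}{ds}\big(g(V,\dot\gamma)\big)$ by $\phi^2$ and integrating by parts does not produce $\Vert V\Vert_p+\Vert V\Vert_q$. After the integration by parts you are left with $\int 2\phi\phi'\,g(V,\dot\gamma)\,ds$ supported on $[0,1]\cup[r-1,r]$, where $g(V,\dot\gamma)$ is evaluated at interior points $\gamma(s)$; the best available bound is $\sup_{x\in\mathcal{B}^+_p(1)}\Vert V\Vert_x$ and the analogous supremum near $q$, not the pointwise quantities $\Vert V\Vert_p,\Vert V\Vert_q$ appearing in \eqref{theorem1bound}. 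You also replace $\lambda r$ by $\lambda\int\phi^2=\lambda(r-\tfrac43)$, which perturbs the constant. The paper avoids both defects by never cutting off the soliton inequality: it integrates \eqref{Eq;a} over all of $[0,r]$, so the total derivative contributes exactly the boundary values ${\gamma^{\prime}}^kV_k$ at $s=0$ and $s=r$, which Cauchy--Schwarz bounds by $\Vert V\Vert_p$ and $\Vert V\Vert_q$ (see \eqref{UUU}); the cutoff is used only in the separate Ricci-integral lemma. The remaining ingredient you flagged, namely that $\mathcal{L}_{\hat V}F^2=2\frac{d}{ds}({\gamma^{\prime}}^kV_k)$ along a geodesic, does hold: contracting \eqref{Eq;Lieder} with ${\gamma^{\prime}}^j{\gamma^{\prime}}^k$ kills the Cartan-tensor term and metric compatibility of the Cartan connection does the rest.
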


The inequality \eqref{theorem1bound} is equivalent to say that the distance $d(p,q)$ defined by the Finsler structure $F$ has an upper bound depending on the geometry in the forward-one-ball and backward-one-ball around $p$ and $q $ in $M$.
As a consequence, we have the following results.
\begin{thm}\label{thm2}
Let $(M,F)$ be a complete Finsler manifold satisfying (\ref{Eq;a}). Then the fundamental group $\pi_1(M)$ of $M$ is finite.
\end{thm}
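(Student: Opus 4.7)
The plan is to reduce Theorem \ref{thm2} to Theorem \ref{theorem1} applied on the universal covering space $\tilde{M}$ equipped with the lifted Finsler structure $\tilde{F}:=F\circ d\pi$, where $\pi:\tilde{M}\to M$ is the universal covering projection. Since $\pi$ is a local isometry, the deck transformation group $\Gamma\cong\pi_1(M)$ acts on $\tilde{M}$ by $\tilde{F}$-isometries, and geodesic completeness of $F$ lifts to geodesic completeness of $\tilde{F}$. Lifting $V$ to $\tilde{V}$ on $\tilde{M}$, the fact that $V$ descends from $M$ forces $\tilde{V}$ to be $\Gamma$-invariant, i.e.\ $\gamma_{*}\tilde{V}=\tilde{V}$ for every $\gamma\in\Gamma$.

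Next I would check that the hypothesis \eqref{Eq;a} is inherited by $(\tilde{M},\tilde{F},\tilde{V})$. Because Akbar-Zadeh's Ricci scalar $\mathcal{R}ic$, the fundamental function $F^{2}$, and the Lie derivative along the complete lift $\hat{V}$ on the slit tangent bundle are all locally defined geometric objects, the local isometry $\pi$ pulls them back compatibly; consequently
\begin{equation*}
2\tilde{F}^{2}\mathcal{R}ic+\mathcal{L}_{\hat{\tilde{V}}}\tilde{F}^{2}\geqslant 2\lambda \tilde{F}^{2}
\end{equation*}
holds on $\tilde{M}$ with the same $\lambda>0$. Hence Theorem \ref{theorem1} is applicable to $(\tilde{M},\tilde{F})$.

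Now fix $p\in M$ and a lift $\tilde{p}\in\pi^{-1}(p)$. Every other element of $\pi^{-1}(p)$ is of the form $\tilde{q}=\gamma(\tilde{p})$ for some $\gamma\in\Gamma$. Since $\gamma$ is a $\tilde{F}$-isometry preserving $\tilde{V}$, the local geometric quantity $H_{\tilde{q}}$ appearing in \eqref{theorem1bound} equals $H_{\tilde{p}}$, and $\Vert\tilde{V}\Vert_{\tilde{q}}=\Vert\tilde{V}\Vert_{\tilde{p}}$. Applying \eqref{theorem1bound} in $\tilde{M}$ therefore yields
\begin{equation*}
\tilde{d}(\tilde{p},\gamma(\tilde{p}))\leqslant \max\Bigl\{1,\tfrac{1}{\lambda}\bigl(2(n-1)+2H_{\tilde{p}}+2\Vert\tilde{V}\Vert_{\tilde{p}}\bigr)\Bigr\}=:R,
\end{equation*}
a constant independent of $\gamma$. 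Thus the entire fiber $\pi^{-1}(p)$ lies in the closed forward ball of radius $R$ around $\tilde{p}$. By the Finslerian Hopf-Rinow theorem this closed ball is compact; since $\pi^{-1}(p)$ is a discrete subset of $\tilde{M}$, its intersection with a compact set is finite, and hence $|\pi_{1}(M)|=|\pi^{-1}(p)|<\infty$.

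The principal technical point, and the step I expect to demand the most care, is verifying that the hypothesis \eqref{Eq;a} genuinely descends to the universal cover in the Finsler category; this amounts to checking that the complete lift construction $V\mapsto\hat{V}$ on the slit tangent bundle commutes with the covering projection and that isometries of $\tilde{F}$ preserve both the Ricci scalar and the norm $\Vert\tilde{V}\Vert$. Once this functoriality is established, the remainder of the argument is a routine covering-space deployment of Theorem \ref{theorem1} in the spirit of Wylie's Riemannian argument.
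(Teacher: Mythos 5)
Your proposal is correct and takes essentially the same route as the paper: lift $F$ and $V$ to the universal cover, verify that \eqref{Eq;a} descends under the local isometry, apply the bound \eqref{theorem1bound} to $\tilde p$ and its deck-transformation images using the invariance of $H$ and $\Vert V\Vert$, and conclude via the Finslerian Hopf--Rinow theorem and discreteness of the fiber. The only difference is cosmetic: the paper spells out the verification that $\bar p^*F$ is a Finsler structure and that $\bar p^*\mathcal{R}ic=\tilde{\mathcal{R}}ic$, which is exactly the ``principal technical point'' you flag.
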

In particular, it follows that  a complete shrinking Ricci soliton Finsler space has a finite fundamental group and hence its first de Rham cohomology group vanishes.
\section{Preliminaries and terminologies}
 Let $M$ be a real {\it n}-dimensional differentiable  manifold. We denote by $TM$ its tangent bundle and by $\pi :TM_{0} \longrightarrow M$ the bundle of non zero tangent vectors.
A \emph{Finsler structure} on $M$ is a function $F:TM\longrightarrow [0,\infty)$ with the following properties:\\
(i) Regularity: $F$ is $C^{\infty}$ on the entire slit tangent bundle $TM_{0}=TM\backslash 0$.\\
(ii) Positive homogeneity: $F(x,\lambda y)=\lambda F(x,y)$ for all $\lambda > 0$.\\
(iii) Strong convexity: The $n\times n$ Hessian matrix $g_{ij}=([\frac{1}{2} F^{2}]_{y^{i} y^{j}})$ is positive definite at every point of $TM_{0}$.
A \emph{Finsler manifold} $(M,F)$ is a pair consisting of a differentiable manifold $M$ and a Finsler structure $F$. The \emph{Cartan tensor}  the \emph{formal Christoffel symbols} and the \emph{spray coefficients} are defined here by $C_{ijk}=\frac{1}{2}\frac{\partial}{\partial y^{i}}g_{jk}$,
\begin{align}\label{chri}
\gamma^i_{jk}:=g^{is}\frac{1}{2}\big(\frac{\partial g_{sj}}{\partial x^k}-\frac{\partial g_{jk}}{\partial x^s}+\frac{\partial g_{ks}}{\partial x^j}\big),
\end{align}
 and
 \begin{align}\label{E,G}
 G^i&:=\frac{1}{2}\gamma^i_{jk}y^j y^k,
\end{align}
respectively.
We consider also the \emph{reduced curvature tensor} $R^i_k$ which is a connection free quantity and is expressed in terms of partial derivatives of spray coefficients as follows.
 \begin{align}\label{E,Ricci scalar}
 R^{i}_{k}:=\frac{1}{F^2}\big(2\frac{\partial G^i}{\partial x^k}-\frac{\partial^2 G^i}{\partial x^j \partial y^k}y^j +2G^j\frac{\partial^2 G^i}{\partial y^j \partial y^k} - \frac{\partial G^i}{\partial y^j}\frac{\partial G^j}{\partial y^k}\big).
\end{align}


\subsection{Lie derivative on Finsler manifolds}
 Let $V=v^{i}(x) \frac{\partial}{\partial x_i}$  be a vector field on $M$.  If \{$\varphi_t$\} is the local one-parameter group of $M$ generated by $V$, then it induces an infinitesimal point transformation on $M$ defined by $\varphi^\star_t (x^{i}):= \bar{x}^{i}$, where $\bar{x}^i=x^{i}+v^{i}(x) t$, for all $t$ sufficiently close to zero, that is $t\in (-\epsilon,\epsilon)$ and $\epsilon>0$. This is naturally extended to the point transformation $\tilde{\varphi_t}$ on the tangent bundle $TM$ defined by $\tilde{\varphi_t}^{\star} := (\bar{x}^{i},\bar{y}^{i})$, where
\begin{align} \label{E;T}
\bar{x}^{i}=x^{i}+v^{i}(x) t, \quad \bar{y}^{i}=y^{i}+\frac{\partial v^{i}}{\partial x^{m}}  y^{m} t.
\end{align}
It can be shown that, $\{\tilde{\varphi_t}\}$ induces a vector field $\hat{V}=v^{i}(x) \frac{\partial}{\partial x^{i}}+y^{j} \frac{\partial v^{i}}{\partial x^{j}}  \frac{\partial}{\partial y^{i}}$ on $TM$ called the \emph{complete lift} of $V$. The
 one-parameter group associated to  the complete lift $\hat{V}$ is given by  $\tilde{\varphi_t}(x,y)=(\varphi_t(x),y^i \frac{\partial \varphi_t}{\partial x^i})$.

Let $\Upsilon^{I}(x,y)$  be  an arbitrary geometric object on $TM$, where $I$ is a mixed multi index and $\hat{V}$ the complete lift  of a vector field $V$ on $M$.
The \emph{Lie derivative} of  $\Upsilon^{I}(x,y)$ in direction of $\hat{V}$ is defined by
\begin{align} \label{Df;LieDer}
{\mathcal{L}_{\hat{V}} \Upsilon^I=\lim_{t\rightarrow 0}\frac{\tilde{\varphi_t}^{\star}(\Upsilon^{I})-\Upsilon^{I}}{t}}=\frac{d}{dt}\tilde{\varphi_t}^{\star}(\Upsilon^{I}),
\end{align}
where $\tilde{\varphi_t}^\star(\Upsilon^{I})$ is the deformation of $\Upsilon^{I}(x,y)$ under the extended point transformation (\ref{E;T}). Whenever the geometric object $\Upsilon^{I}$ is a tensor field, $\tilde{\varphi_t}^\star(\Upsilon^{I})$ coincides with the classical notation of pullback of $\Upsilon^{I}(x,y)$.
Lie derivative of the Finsler metric tensor $g_{jk}$ is given in the following tensorial form by
\begin{align}\label{Eq;Lieder}
\mathcal{L}_{\hat V}g_{jk}=\nabla_jV_k+\nabla_kV_j+2(\nabla_0V^l)C_{ljk},
\end{align}
where $\nabla$ is the horizontal covariant derivative in Cartan connection, $\nabla_p=\nabla_{\frac{\delta}{\delta x^p}}$ its components and $\nabla_0=y^p\nabla_p$, see \cite{BJ} for more details.
\subsection{The second variation of arc length in Finsler geometry}
Let $(M,F)$ be a Finsler manifold and $\gamma:[a,b]\longrightarrow M$ a piecewise $C^{\infty}$ curve on $M$, with the velocity $\frac{d\gamma}{dt}=\frac{d\gamma^i}{dt}\frac{\partial}{\partial x^i}\in T_{\gamma(t)}M$. Its \emph{Finslerian length} $L(\gamma)$ is defined by
$L(\gamma)=\int_a^bF(\gamma,\frac{d\gamma}{dt})dt.$
For $p,q\in M$, denote by $\Gamma(p,q)$ the collection of all piecewise smooth curves $\gamma:[a,b]\longrightarrow M$ with $\gamma(a)=p$ and $\gamma(b)=q$. Define the \emph{Finslerian distance} function $d:M\times M\longrightarrow [0,\infty)$ by
$d(p,q):=\inf\limits_{\gamma\in\Gamma(p,q)}L(\gamma).$
Note that in general this distance function does not have the symmetric property.
A Finsler manifold $(M, F)$ is said to be\emph{ forward} (resp. \emph{backward}) \emph{geodesically complete} if every geodesic $\gamma(t)$, for $a\leq t<b$ (resp. $a<t \leq b$), parameterized by arc length, can be extended to a geodesic defined on $a\leq t<\infty$ (resp. $\infty <t\leq b$). In general, forward geodesically completeness is not equivalent to backward geodesically completeness, see \cite[p. 168]{bcs}.
 According to the Hopf-Rinow theorem,  on a forward (or backward) geodesically complete Finsler space, every two points $p,q\in M$ can be joined by a minimal geodesic. The forward and backward metric balls $\mathcal{B}^+_p(r)$ and $\mathcal{B}^-_p(r)$ are defined by
$\mathcal{B}^+_p(r):=\{x\in M\ |\ d(p,x)<r\}$ and $\mathcal{B}^-_p(r):=\{x\in M\ |\ d(x,p)<r\}$, respectively.

Here we set up our notations for the second variation of arc length similar to that of in \cite{Ak3} and \cite{bcs}. Let $\gamma:[0,r]\longrightarrow M$ be a regular piecewise smooth curve in $M$. Define
$\mathsf R=\{(s,t)\vert \ 0\leqslant s \leqslant r,\ \ -\epsilon\leqslant t \leqslant \epsilon\}.$
A piecewise smooth \emph{variation} of $\gamma(s)$ is a continuous map $\gamma(s,t)$ from $\mathsf R$ into $M$ which is piecewise smooth and such that $\gamma(s,0)$ reduces to the given $\gamma(s)$. Their velocity fields give rise, respectively, to the two vector fields $T:=\gamma_*\frac{\partial}{\partial s}=\frac{\partial\gamma}{\partial s}$ and $U:=\gamma_*\frac{\partial}{\partial t}=\frac{\partial\gamma}{\partial t}$,  are tangent to $M$.
  The map $\gamma(s,t)$ admits a canonical lift $\hat{\gamma}:\mathsf R\longrightarrow TM_0$ defined by
$$\hat{\gamma}(s,t):=(\gamma(s,t),T(s,t)).$$
Without loss of generality, we can also consider $T$ and $U$ as elements in the fibre of $\pi^*TM$ over the point $\hat\gamma$.
Corresponding to $\hat{\gamma}$, one gets the following vector fields,
$$\hat T:=\hat\gamma_*\frac{\partial}{\partial s}=\hat\gamma^{\prime}(s),\qquad \hat U:=\hat\gamma_*\frac{\partial}{\partial t}=\frac{\partial\hat\gamma}{\partial t},$$
where $\hat T,\hat U\in T_{_{\hat\gamma(s,t)}}TM_0.$

Now, let $\gamma(s)$, $s\in [0,r]$, be a geodesic of Cartan connection parameterized by the arc length $s$. The \emph{second variation} of arc length in Finsler geometry is given by
\begin{align}\label{secondVAL}
 L^{\prime\prime}(0)=g(\nabla_{\hat U}U,T)\Big\vert_0^r+\int_0^r\Big[g(\nabla_{\hat T}U,\nabla_{\hat T}U)-g(R(U,T)T,U)-|\frac{\partial}{\partial s}g(U,T)|^2\Big]ds,
\end{align}
where $R(U,T)T$ is the {\it hh}-curvature of  Cartan connection. For a global approach to the  definition of  {\it hh}-curvature of  Cartan connection one can refer to \cite[p. 218]{Ak3} or \cite[p. 3]{BSY}.
\section{An estimation for the integral of Ricci tensor along geodesics}
 Let $(M,F)$ be a Finsler manifold and $p\in M$. Define
\begin{align}\label{Hpp}
H_p=\sup\limits_{x\in \mathcal{B}^+_p(1)\cup \mathcal{B}^-_p(1)}\max\limits_{y\in S_xM}|\mathcal{R}ic(x,y)|,
\end{align}
where $S_xM:=\{y\in T_xM | F(x,y)=1\}$  is always compact whether $M$ is compact
or not and $H_p $  is bounded. Denote by $SM$ the sphere bundle, defined by $SM:=\bigcup\limits _{x\in M} S_xM$. Now we are in a position to prove the following lemma.
\begin{lem}\label{lemma}
Let $(M,F)$ be a  complete Finsler manifold, $p,q\in M$ such that $r:=d(p,q)>1$ and $\gamma$ a minimal geodesic from $p$ to $q$ parameterized by the arc length $ s $. We have
$$\int_0^r\mathcal{R}ic(\gamma,\gamma^{\prime})ds\leqslant 2(n-1)+H_p+H_q.$$
\end{lem}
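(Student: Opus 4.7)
The plan is to imitate the classical Myers/Wylie second-variation argument in the Finsler setting, applying (\ref{secondVAL}) to $n-1$ proper variations built from parallel vector fields and a piecewise-linear cutoff. Concretely, I first parallel-transport an orthonormal basis $\{E_1,\dots,E_{n-1}\}$ of the $g$-orthogonal complement of $T:=\gamma'$ at $\gamma(0)$ along the canonical lift $\hat\gamma$ via the Cartan connection, obtaining $\nabla_{\hat T}E_i=0$ together with $g(E_i,E_j)=\delta_{ij}$ and $g(E_i,T)=0$. I then introduce the bump $\phi(s):=\min\{s,1,r-s\}$; assuming $r\geq 2$ (the case $1<r<2$ is handled analogously by splitting at $s=r/2$), $\phi$ vanishes at the endpoints and satisfies $(\phi')^2\equiv 1$ on $[0,1]\cup[r-1,r]$ and $0$ on $[1,r-1]$.

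For each $i$, I take the proper variation $\gamma_i(s,t):=\exp_{\gamma(s)}\bigl(t\phi(s)E_i(s)\bigr)$: its variation field is $U_i=\phi E_i$, and its transverse $t$-curves are geodesics, so $\nabla_{\hat U_i}U_i\equiv 0$ and the boundary contribution in (\ref{secondVAL}) drops. Since $g(U_i,T)=0$ the last summand vanishes, while parallelism of $E_i$ gives $g(\nabla_{\hat T}U_i,\nabla_{\hat T}U_i)=(\phi')^2$. Minimality of $\gamma$ forces $L_i''(0)\geq 0$, that is,
\[\int_0^r\phi^2\,g(R(E_i,T)T,E_i)\,ds\leq\int_0^r(\phi')^2\,ds.\]
Summing over $i$ and invoking the Akbar-Zadeh trace identity $\sum_{i=1}^{n-1}g(R(E_i,T)T,E_i)=\mathcal{R}ic(\gamma,\gamma')$ along the unit-speed geodesic $\gamma$ gives
\[\int_0^r\phi^2\,\mathcal{R}ic(\gamma,\gamma')\,ds\leq(n-1)\int_0^r(\phi')^2\,ds=2(n-1).\]

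To finish, I split
\[\int_0^r\mathcal{R}ic\,ds=\int_0^r\phi^2\,\mathcal{R}ic\,ds+\int_0^1(1-s^2)\,\mathcal{R}ic\,ds+\int_{r-1}^r\bigl(1-(r-s)^2\bigr)\,\mathcal{R}ic\,ds.\]
Since $\gamma$ is minimal, $d(p,\gamma(s))=s<1$ for $s\in[0,1)$ places $\gamma(s)\in\mathcal{B}^+_p(1)$, and dually $d(\gamma(s),q)=r-s<1$ for $s\in(r-1,r]$ places $\gamma(s)\in\mathcal{B}^-_q(1)$. Definition (\ref{Hpp}) then bounds $|\mathcal{R}ic(\gamma,\gamma')|$ pointwise by $H_p$ and $H_q$ on the two correction intervals; as $0\leq 1-s^2\leq 1$, each correction contributes at most $H_p$ or $H_q$, yielding the desired $2(n-1)+H_p+H_q$.

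The delicate points will be the Finsler-specific verifications: that for the exponential-map variation $\nabla_{\hat U_i}U_i$ truly vanishes on all of $[0,r]$ so the endpoint term in (\ref{secondVAL}) drops, and that the $g$-orthonormal trace of the $hh$-curvature of the Cartan connection coincides with Akbar-Zadeh's $\mathcal{R}ic$ for unit-speed $T$. Both are standard within Akbar-Zadeh's calculus but must be invoked with care, since $g$, $\nabla$ and $R$ all depend on the tangent direction.
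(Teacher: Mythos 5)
Your proposal is correct and follows essentially the same route as the paper: the Myers/Wylie second-variation argument with parallel orthonormal fields, the trapezoidal cutoff $\phi=\min\{s,1,r-s\}$, the trace identity giving $\mathcal{R}ic$, and the bounding of the two near-endpoint correction integrals by $H_p$ and $H_q$ via the definition \eqref{Hpp}. You are in fact slightly more careful than the paper on two points it glosses over --- killing the boundary term $g(\nabla_{\hat U}U,T)\big\vert_0^r$ by an explicit exponential-map variation, and flagging the degenerate case $1<r<2$.
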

\begin{proof}
Let $\{E_i\}_{i=1}^n$ be an orthonormal frame for $\pi^*TM$ along $\gamma(s)$, where $E_n:=\gamma^{\prime}(s)$. It can be easily shown that its parallel transport along $\gamma$ remains orthonormal, cf., \cite[p. 316]{bcs}. The geodesic $\gamma$ is minimal and has minimal length in its homotopy class. It follows that $L^{\prime\prime}(0)\geqslant 0$. Hence, for any piecewise smooth function $\phi$ with $\phi(0)=\phi(r)=0$, inserting $U:= \phi E_i$, for each $1\leqslant i\leqslant n-1$, into the second variation formula (2.5) yields
$$0\leqslant\int_0^r\big(g(\nabla_{\hat\gamma^{\prime}}(\phi E_i),\nabla_{\hat\gamma^{\prime}}(\phi E_i))-g(R(\phi E_i,\gamma^{\prime})\gamma^{\prime},\phi E_i)\big)ds.$$
This implies
\begin{align}\label{SVAL}
0\leqslant\ \sum\limits_{i=1}^{n-1}\int_0^r\big(g(\nabla_{\hat\gamma^{\prime}}(\phi E_i),\nabla_{\hat\gamma^{\prime}}(\phi E_i))-g(R(\phi E_i,\gamma^{\prime})\gamma^{\prime},\phi E_i)\big)ds.
\end{align}
Since $E_n=\gamma^{\prime}$ and $g(R(\phi E_n,\gamma^{\prime})\gamma^{\prime},\phi E_n)=0$, we have
\begin{align}\label{GRR}
\nonumber\sum\limits_{i=1}^{n-1}g(R(\phi E_i,\gamma^{\prime})\gamma^{\prime},\phi E_i)&=\sum\limits_{i=1}^{n}g(R(\phi E_i,\gamma^{\prime})\gamma^{\prime},\phi E_i)\\&=\phi^2{\gamma^{\prime}}^j{\gamma^{\prime}}^kR^i_{jik}=\phi^2\mathcal{R}ic(\gamma,\gamma^{\prime}).
\end{align}
Using $\nabla_{\gamma^{\prime}}(\phi E_i)=\frac{d\phi}{ds}E_i$ and substituting (\ref{GRR}) into (\ref{SVAL}), we get
\begin{align}\label{VF}
\nonumber 0\leqslant \ \int_0^r &\big(\sum\limits_{i=1}^{n-1}(\frac{d\phi}{ds})^2-\phi^2(s)\mathcal{R}ic(\gamma,\gamma^{\prime})\big)ds\\&=\int_0^r\big((n-1)(\frac{d\phi}{ds})^2-\phi^2(s)\mathcal{R}ic(\gamma,\gamma^{\prime})\big)ds.
\end{align}
By decomposition of (\ref{VF}) we have
\begin{align}\label{tajziye}
0\leqslant &\int_0^1(n-1)(\frac{d\phi}{ds})^2ds+\int_1^{r-1}(n-1)(\frac{d\phi}{ds})^2ds+\int_{r-1}^r(n-1)(\frac{d\phi}{ds})^2ds\nonumber\\&-\int_0^1\phi^2(s)\mathcal{R}ic(\gamma,\gamma^{\prime})\ ds-\int_{1}^{r-1}\phi^2(s)\mathcal{R}ic(\gamma,\gamma^{\prime})\ ds\nonumber\\&-\int_{r-1}^r\phi^2(s)\mathcal{R}ic(\gamma,\gamma^{\prime})\ ds.
\end{align}
Assuming the piecewise smooth function $\phi:[0,r]\longrightarrow [0,1]$ is given by
$$
\phi(s)=\left\{
\begin{array}{ccc}
s \qquad&  & 0\leqslant s\leqslant 1, \qquad\\
 1 \qquad&  & 1\leqslant s\leqslant r-1,\\
 r-s &  & r-1\leqslant s\leqslant r,
 \end{array}\right.
$$
reduces (\ref{tajziye}) to
\begin{align}\label{tajziye1}
\nonumber0\leqslant \ &2(n-1)-\int_0^1\phi^2(s)\mathcal{R}ic(\gamma,\gamma^{\prime})\ ds-\int_{1}^{r-1}\mathcal{R}ic(\gamma,\gamma^{\prime})\ ds\\&-\int_{r-1}^r\phi^2(s)\mathcal{R}ic(\gamma,\gamma^{\prime})\ ds.
\end{align}
Adding $\int_{0}^{r}\mathcal{R}ic(\gamma,\gamma^{\prime})ds$ to the both sides of (\ref{tajziye1}) yields
\begin{align}\label{tajziye2}
\nonumber\int_{0}^{r}\mathcal{R}ic(\gamma,\gamma^{\prime})\ ds\leqslant &\ 2(n-1)+\int_0^1(1-\phi^2(s))\mathcal{R}ic(\gamma,\gamma^{\prime})\ ds\\&+\int_{r-1}^r(1-\phi^2(s))\mathcal{R}ic(\gamma,\gamma^{\prime})\ ds.
\end{align}
By the minimizing property of $\gamma$, $d(p,\gamma(s))=s$. It follows that $d(p,\gamma(s))\leqslant 1$ for $0\leqslant s\leqslant 1$ and $\gamma(s)\in\mathcal{B}^+_p(1)$ for $0\leqslant s\leqslant 1$. Thus, $ \mathcal{R}ic(\gamma,\gamma^{\prime})\leqslant H_p $, where $0\leqslant s\leqslant 1$. Since $0\leqslant\phi\leqslant 1$ we have
\begin{align}\label{Hp}
\int_0^1(1-\phi^2(s))\mathcal{R}ic(\gamma,\gamma^{\prime})\ ds\leqslant H_p.
\end{align}
Similarly, the minimizing property of $\gamma$ yields $d(\gamma(s),q)=r-s$. Hence, $d(\gamma(s),q)\leqslant 1$,  for $r-1\leqslant s\leqslant r$ and consequently $ \mathcal{R}ic(\gamma,\gamma^{\prime})\leqslant H_q $. Therefore
\begin{align}\label{Hq}
\int_{r-1}^r(1-\phi^2(s))\mathcal{R}ic(\gamma,\gamma^{\prime})\ ds\leqslant H_q.
\end{align}
Replacing (\ref{Hp}) and (\ref{Hq}) in (\ref{tajziye2}) we conclude
$$\int_{0}^{r}\mathcal{R}ic(\gamma,\gamma^{\prime})\ ds\leqslant \ 2(n-1)+H_p+H_q.$$
As we have claimed.
\end{proof}
\section{Shrinking Ricci soliton Finsler spaces}
Let $(M,F)$ be a Finsler manifold and $V=v^{i} \frac{\partial}{\partial x^{i}}$
 a vector field on $M$. The triple $(M,F,V)$ is called  a \emph{Ricci soliton} if
 \begin{align}\label{Eq;DefRicciSoliton+}
2Ric_{jk}+\mathcal{L}_{\hat{V}}  {g_{jk}}=2 \lambda g_{jk},
\end{align}
where $\hat V$ is the complete lift of $V$ and  $\lambda \in {\mathbb{R}}$, cf. \cite{BY}.
By multiplying $y^jy^k$ in the both sides of (\ref{Eq;DefRicciSoliton+}), this equation leads to
\begin{align}
2F^2\mathcal{R}ic+\mathcal{L}_{\hat{V}}{F^2}=2 \lambda F^2.
\end{align}
  A Ricci soliton Finsler space is said to be \emph{shrinking, steady} or \emph{expanding} if $\lambda>0$, $\lambda=0$ or $\lambda<0$, respectively. A Ricci soliton Finsler space is called \emph{forward complete} (resp. \emph{backward complete}) if $ (M,F) $ is forward complete (resp. backward complete). For a vector field $ X=X^i\frac{\partial}{\partial x^i} $ on $ M $ define
\begin{align}
\Vert X\Vert_x=\max\limits_{y\in S_xM}\sqrt{g_{ij}(x,y)X^iX^j},
\end{align}
where $x\in M$, see \cite{bcs}, page 321. Since $ S_xM $ is compact, $ \Vert X\Vert_x $ is well defined.\\

\emph{Proof of Theorem \ref{theorem1}.}
Let $ p$ and $q $ be two points in $ M $ joined by a minimal geodesic $\gamma$ parameterized by the arc length $ s $, $ \gamma:[0,\infty)\longrightarrow M$. If $d(p,q)\leqslant 1$, then we have the assertion. Assume that $d(p,q)> 1$. Using (\ref{Eq;Lieder}) we have along $ \gamma $
\begin{align}\label{Eq;1}
\mathcal{L}_{\hat{V}}{F^2}={\gamma^{\prime}}^j{\gamma^{\prime}}^k\mathcal{L}_{\hat V}g_{jk}={\gamma^{\prime}}^j{\gamma^{\prime}}^k\big(\nabla_jV_k+\nabla_kV_j+2(\nabla_0V^l)C_{ljk}\big).
\end{align}
Since $ {\gamma^{\prime}}^j{\gamma^{\prime}}^k(\nabla_0V^l)C_{ljk}(\gamma(s),\gamma^{\prime}(s))=0 $, (\ref{Eq;1}) reduces to
\begin{align}\label{Eq;2}
\mathcal{L}_{\hat{V}}{F^2}=2{\gamma^{\prime}}^j{\gamma^{\prime}}^k\nabla_jV_k.
\end{align}
On the other hand, by metric compatibility of  Cartan connection,  along the geodesic $\gamma$, we have
\begin{align}\label{Eq;3}
{\gamma^{\prime}}^j{\gamma^{\prime}}^k\nabla_jV_k=
\nabla_{{\gamma^{\prime}}^j\frac{\delta}{\delta x^j}}({\gamma^{\prime}}^kV_k)=
\nabla_{\hat\gamma^{\prime}}({\gamma^{\prime}}^kV_k)=
\frac{d}{ds}({\gamma^{\prime}}^kV_k),
\end{align}
where, $\hat\gamma^{\prime}={\gamma^{\prime}}^j\frac{\delta}{\delta x^j}$. Replacing (\ref{Eq;3}) in (\ref{Eq;2}) we obtain
\begin{align}\label{Eq;4}
\mathcal{L}_{\hat{V}}{F^2}=2\frac{d}{ds}({\gamma^{\prime}}^kV_k).
\end{align}
In accordance with (\ref{Eq;a}) and (\ref{Eq;4}), along $\gamma$ we get
\begin{align*}
2F^2(\gamma,\gamma^{\prime})\mathcal{R}ic(\gamma,\gamma^{\prime})+2\frac{d}{ds}({\gamma^{\prime}}^kV_k)\geqslant 2 \lambda F^2(\gamma,\gamma^{\prime}).
\end{align*}
 Integrating  both sides of the last equation leads to
\begin{align}\label{UUU}
\int_0^r\mathcal{R}ic(\gamma,\gamma^{\prime})\ ds\geqslant \lambda r-{\gamma^{\prime}}^k(r)V_k+{\gamma^{\prime}}^k(0)V_k.
\end{align}
The Cauchy-Schwarz inequality gives $|{\gamma^{\prime}}^k(0)V_k|\leqslant \Vert V\Vert_p$ and $|{\gamma^{\prime}}^k(r)V_k|\leqslant \Vert V\Vert_q$.
Therefore, ${-\Vert V\Vert_p\leqslant\gamma^{\prime}}^k(0)V_k\leqslant \Vert V\Vert_p$ and ${-\Vert V\Vert_q\leqslant\gamma^{\prime}}^k(r)V_k\leqslant \Vert V\Vert_q$.
 Thus, we get
\begin{align}\label{Vert}
-{\gamma^{\prime}}^k(r)V_k+{\gamma^{\prime}}^k(0)V_k\geqslant -\Vert V\Vert_q-\Vert V\Vert_p.
\end{align}
Replacing (\ref{Vert}) in (\ref{UUU}) we have
\begin{align}\label{UUV}
\int_0^r\mathcal{R}ic(\gamma,\gamma^{\prime})\ ds\geqslant \lambda r-\Vert V\Vert_q-\Vert V\Vert_p.
\end{align}
According to (\ref{UUV}) and Lemma \ref{lemma} we have
\begin{align*}
2(n-1)+H_p+H_q\geqslant\lambda r-\Vert V\Vert_p-\Vert V\Vert_q.
\end{align*}
Finally,  for any $p,q\in M$, we get
  \begin{align*}
r=d(p,q)\leqslant\frac{1}{\lambda}\big(2(n-1)+H_p+H_q+\Vert V\Vert_p+\Vert V\Vert_q\big).
\end{align*}
Therefore, we have
\begin{align*}
  d(p,q)\leqslant \max\big\{1,\frac{1}{\lambda}\big(2(n-1)+H_p+H_q+\Vert V\Vert_p+\Vert V\Vert_q\big)\big\}.
  \end{align*}
This completes the proof of Theorem \ref{theorem1}.\hspace{\stretch{1}}$\Box$\\
If $M$ is a connected smooth manifold, then there exists a simply connected smooth manifold $\tilde M$, called its universal covering and a smooth covering map $p:\tilde M\longrightarrow M$ which is unique up to a diffeomorphism. The complete lift of $p$ is a map $\bar p:T\tilde M\longrightarrow TM$ given by
$$ \bar p(\tilde x,\tilde y)=(p(\tilde x),\tilde y^i\frac{\partial p}{\partial \tilde x^i})=(p(\tilde x),\tilde y^i\frac{\partial p^j}{\partial \tilde x^i}\frac{\partial}{\partial x^j}), $$
where $\tilde y\in T_{\tilde x}\tilde M$. A \textit{deck transformation} on the universal covering manifold $\tilde M$ is an isometry $ h:\tilde M\longrightarrow \tilde M$ such that $p\circ h=p$. The set of all deck transformations forms a group which is isomorphic to the fundamental group $\pi_1(M)$ of $M$.\\

\emph{Proof of Theorem \ref{thm2}.}
Let $(M,F)$ be a Finsler manifold and $ \tilde M $ its universal covering with the smooth covering map $ p:\tilde M\longrightarrow M $.
 Pull back of the complete lift $ \bar p $ namely,  $\bar p^*F:=F\circ \bar p:T\tilde M\longrightarrow [0,\infty) $ defines a Finsler structure on $ \tilde M $. In fact, one can easily check out the three conditions of Finsler structure, as follows.
  The regularity condition is satisfied since $ F $ and $ p $ are $ C^{\infty} $, and so is $\bar p^*F $.
\begin{align*}
\bar p^*F(x,\lambda y)&=F\circ \bar p(x,\lambda y)=F(p(x),\lambda y^i\frac{\partial p}{\partial x^i})\\&=\lambda F(p(x),y^i\frac{\partial p}{\partial x^i})=\lambda\bar p^*F(x,y).
\end{align*}
From which positive homogeneity is deduced. Next, assuming $\bar p^*x^i=\tilde x^i $ and $\bar p^*y^i=\tilde y^i $, for strong convexity we have
\begin{align*}
\tilde g_{ij}:=[\frac{1}{2}(\bar p^*F)^2]_{\tilde y^i\tilde y^j}=\frac{1}{2}\frac{\partial^2((\bar p^*F)^2)}{\partial \tilde y^i\partial \tilde y^j}=\frac{1}{2}\frac{\partial^2(\bar p^*F^2)}{\partial \tilde y^i\partial \tilde y^j}.
\end{align*}
One can easily check that
$$\frac{\partial(\bar p^*F^2)}{\partial \tilde y^i}=\bar p^*\frac{\partial F^2}{\partial y^i},$$
therefore,
\begin{align}\label{Eq:pd}
\tilde g_{ij}=[\frac{1}{2}(\bar p^*F)^2]_{\tilde y^i\tilde y^j}=\frac{1}{2}\frac{\partial^2(\bar p^*F^2)}{\partial \tilde y^i\partial \tilde y^j}=\bar p^*[\frac{1}{2}F^2]_{ y^i y^j}=\bar p^*g_{ij}.
\end{align}
Using the facts that $[\frac{1}{2}F^2]_{ y^i y^j}$ is positive definite on $TM_0$ and $\bar p^* $ is a local diffeomorphism ($p$ is the smooth covering map), $ \bar p^*[\frac{1}{2}F^2]_{ y^i y^j}$ is also positive definite on $T\tilde M_0$ and hence $\tilde F:=\bar p^*F$, defines a Finsler structure on $T\tilde M_0$ moreover $ (\tilde M,\tilde F)$ is locally isometric to $ (M,F) $.
 Here, every geometric object defined on $(\tilde M,\tilde F)$ are decorated by a tilde. By means of the local isometry
 $p:(\tilde M,\tilde F)\longrightarrow (M,F),$ and the inequality (\ref{Eq;a}),  we have
\begin{align*}
\bar p^*(2F^2\mathcal{R}ic+\mathcal{L}_{\hat{V}}{F^2})\geqslant 2 \bar p^*(\lambda F^2).
\end{align*}
Linearity of $\bar{p}^*$ implies
\begin{align}\label{ineqaulity}
2(\bar p^*F^2)( \bar p^*{\mathcal{R}ic})+\bar p^*\mathcal{L}_{\hat{V}}F^2\geqslant 2\lambda \bar p^*( F^2).
\end{align}
Let's denote $ W=p^*V $, then by virtue of (\ref{Eq:pd}),  properties of Lie  derivative  and the pull back $\bar{p}^*$ we obtain
\begin{align}\label{Eq;5}
 \bar p^*\mathcal{L}_{\hat{V}}  {F^2}=\mathcal{L}_{\hat{W}}  {\tilde F^2}.
\end{align}
On the other hand, one can easily show that $ \bar p^*\mathcal{R}ic=\tilde{\mathcal{R}}ic $, see \cite{BY}.
Replacing, $\bar p^*\mathcal{R}ic=\tilde{\mathcal{R}}ic$, $\tilde F=\bar p^*F$ and (\ref{Eq;5})  in (\ref{ineqaulity}), leads
\begin{align*}
2\tilde F^2\tilde{\mathcal{R}}ic+\mathcal{L}_{\hat{W}}{\tilde F^2}\geqslant 2\lambda \tilde  F^2.
\end{align*}
Our universal covering $(\tilde M,\tilde F)$ is  geodesically complete because $(M,F)$ is so. In fact, let $\tilde{\gamma}(t)$ be any geodesic emanating from some point $\tilde x\in \tilde M$ at $t=0$.
Clearly,  $\gamma(t):=p(\tilde{\gamma}(t))$ its image by the local isometry  $p$ is also a geodesic. Upon geodesically completeness assumption, $\gamma(t)$ is extendible to all $t\in [0,\infty)$. The said local isometry now implies the same for $\tilde{\gamma}(t)$. Hence the universal covering $(\tilde M,\tilde F)$ is geodesically complete.

 Let $h$ be a deck transformation on $\tilde M$ and $\tilde x\in\tilde M $.
 By definition, $h$ is an isometry, and the forward metric balls $\mathcal{B}^+_{\tilde x}(1)$ and $\mathcal{B}^+_{h(\tilde x)}(1)$ are isometric. Therefore $H_{\tilde x}=H_{h(\tilde x)}$ and $ \Vert W\Vert_{\tilde x}=\Vert W\Vert_{h(\tilde x)}$. By relation (\ref{theorem1bound}) in the proof of  Theorem \ref{theorem1}, for the points $\tilde x$ and $h(\tilde x)$ we get
\begin{align*}
  d(\tilde x,h(\tilde x))\leqslant &\max\big\{1,\frac{1}{\lambda}\Big(2(n-1)+H_{\tilde x}+H_{h(\tilde x)}+\Vert W\Vert_{\tilde x}+\Vert W\Vert_{h(\tilde x)}\Big)\big\}\\&= \max\big\{1,\frac{2}{\lambda}\big(n-1+H_{\tilde x}+\Vert W\Vert_{\tilde x}\big)\big\},
  \end{align*}
for any deck transformation $h$. Thus $p^{-1}(x)$, where $ x=p(\tilde x)$, is forward bounded. Using the forward geodesic completeness and the Hopf-Rinow theorem, the closed and forward bounded subset $ p^{-1}(x) $ of $\tilde M$ is compact and being discrete is finite. By assumption, $ M $ is connected, so all of its fundamental groups, namely $ \pi_1(M,x) $ are isomorphic, where $ x $ denotes the base point. Since $\tilde M $ is a universal covering, $\pi_1(M,x)$ is in a bijective correspondence with $ p^{-1}(x) $ and therefore $\pi_1(M)$ is finite and the first de Rham cohomology group vanishes, i.e., $H^1_{\mathrm{dR}}(M)=0$.
This completes the proof of Theorem \ref{thm2}.\hspace{\stretch{1}}$\Box$\\
\begin{cor}
Let $(M,F,V)$ be a forward and backward complete shrinking Ricci soliton. Then the fundamental group $\pi_1(M)$ of $M$ is finite and therefore $H^1_{\mathrm{dR}}(M)=0$.
\end{cor}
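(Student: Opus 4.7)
The plan is to deduce this corollary directly from Theorem \ref{thm2}, after verifying that a shrinking Ricci soliton $(M,F,V)$ automatically satisfies the structural inequality (\ref{Eq;a}) with $\lambda>0$.

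First, I would recall the defining equation of a Finslerian Ricci soliton, namely $2Ric_{jk}+\mathcal{L}_{\hat V}g_{jk}=2\lambda g_{jk}$, and contract both sides with $y^j y^k$. Since $y^j y^k Ric_{jk}=F^2\mathcal{R}ic$ (which follows from the definition of $\mathcal{R}ic$ as the $y$-trace of the reduced curvature normalized by $F^2$) and the contraction of the Lie derivative commutes with contraction by $y^j y^k$ in the sense already used in the paper immediately after (\ref{Eq;DefRicciSoliton+}), the soliton equation yields the pointwise identity
\begin{equation*}
2F^2\mathcal{R}ic+\mathcal{L}_{\hat V}F^2=2\lambda F^2.
\end{equation*}
In particular, the inequality (\ref{Eq;a}) holds on $TM_0$ with the same constant $\lambda$, which is positive by the shrinking assumption.

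Second, I would observe that the assumption of forward \emph{and} backward completeness matches the completeness hypothesis under which Theorem \ref{thm2} was proved: the proof of Theorem \ref{thm2} uses both $\mathcal{B}^+_p(1)$ and $\mathcal{B}^-_p(1)$ through the definition (\ref{Hpp}) of $H_p$, and the Hopf-Rinow argument applied to $p^{-1}(x)$ in the universal cover requires the same two-sided completeness. Hence Theorem \ref{thm2} applies verbatim to $(M,F)$, giving $|\pi_1(M)|<\infty$.

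Third, for the vanishing of the first de Rham cohomology I would invoke standard algebraic topology: since $\pi_1(M)$ is finite, its abelianization $H_1(M;\mathbb{Z})$ is a finite (hence torsion) abelian group, so by the universal coefficient theorem $H^1(M;\mathbb{R})=\mathrm{Hom}(H_1(M;\mathbb{Z}),\mathbb{R})=0$, and then de Rham's theorem yields $H^1_{\mathrm{dR}}(M)=0$. There is no serious obstacle in this corollary; the only point that demands a line of justification is the passage from the tensorial soliton equation to its $y$-contracted scalar form, which is already implicit in the paper.
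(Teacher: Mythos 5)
Your proposal is correct and matches the paper's (implicit) argument: the paper derives the identity $2F^2\mathcal{R}ic+\mathcal{L}_{\hat V}F^2=2\lambda F^2$ from the soliton equation by contracting with $y^jy^k$ at the start of Section~4, so the hypothesis (\ref{Eq;a}) holds with equality and the corollary is an immediate application of Theorem~\ref{thm2}. Your added justification of $H^1_{\mathrm{dR}}(M)=0$ via the universal coefficient and de Rham theorems is the standard argument the paper leaves unstated.
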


Using this Theorem, we have the following result for a fundamental group of the sphere bundle $SM$.
\begin{cor}\label{thm3}
Let $(M,F,V)$, for $n\geq 3$, be a forward and backward complete shrinking Ricci soliton Finsler space. Then the fundamental group $\pi_1(SM)$ of $SM$ is finite and therefore $H^1_{\mathrm{dR}}(SM)=0$.
\end{cor}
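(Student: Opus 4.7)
The plan is to reduce the question about $\pi_1(SM)$ to the already-established statement about $\pi_1(M)$ by exploiting the fiber bundle structure of the sphere bundle. The projection $\pi : SM \to M$ is a smooth fiber bundle whose fiber over $x\in M$ is the indicatrix $S_xM = \{y \in T_xM : F(x,y)=1\}$; by strong convexity of $F$, radial projection identifies each indicatrix diffeomorphically with the standard sphere $S^{n-1}$, and local trivializations are inherited from local frames on $TM$. So I may apply fibration techniques with typical fiber $S^{n-1}$.

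The next step is to invoke the long exact sequence of homotopy groups of this fibration:
\begin{align*}
\cdots \to \pi_1(S^{n-1}) \to \pi_1(SM) \to \pi_1(M) \to \pi_0(S^{n-1}) \to \cdots
\end{align*}
For $n \geq 3$ the fiber $S^{n-1}$ is both connected and simply connected, giving $\pi_1(S^{n-1}) = 0$ and $\pi_0(S^{n-1}) = 0$. Consequently $\pi_* : \pi_1(SM) \to \pi_1(M)$ is an isomorphism. The hypothesis $n\geq 3$ is used precisely here; for $n = 2$ the fiber $S^1$ contributes a copy of $\mathbb{Z}$ and this reduction fails. The preceding corollary, applied to the forward and backward complete shrinking Ricci soliton $(M,F,V)$, supplies $|\pi_1(M)| < \infty$, and therefore $\pi_1(SM) \cong \pi_1(M)$ is finite.

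For the de Rham statement, finiteness of $\pi_1(SM)$ forces the abelianization $H_1(SM;\mathbb{Z}) = \pi_1(SM)^{\mathrm{ab}}$ to be a finite abelian group, so $\mathrm{Hom}(H_1(SM;\mathbb{Z}),\mathbb{R}) = 0$; by de Rham's theorem together with the universal coefficient theorem this yields $H^1_{\mathrm{dR}}(SM) = 0$. The only delicate point in the whole argument is the verification that $S^{n-1} \hookrightarrow SM \to M$ is honestly a smooth fiber bundle, which is standard in Finsler geometry but is where the strong convexity hypothesis on $F$ is genuinely used; everything else is a formal consequence of Theorem \ref{thm2} and elementary algebraic topology.
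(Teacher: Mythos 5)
Your argument is correct, and it reaches the conclusion by a mildly but genuinely different route than the paper. Both proofs rest on the same key fact --- the exactness of the homotopy sequence of a sphere bundle with fibre $S^{n-1}$, together with $\pi_1(S^{n-1})=0$ for $n\geq 3$ --- but you apply it to the bundle $S^{n-1}\hookrightarrow SM\to M$ itself, obtaining the isomorphism $\pi_1(SM)\cong\pi_1(M)$ (injectivity from $\pi_1(S^{n-1})=0$, surjectivity from the connectedness of the fibre), and then quote the finiteness of $\pi_1(M)$ from the preceding corollary. The paper instead applies the sequence to the lifted bundle $S\tilde M\to\tilde M$ over the universal cover to conclude $\pi_1(S\tilde M)=0$, checks that the complete lift $\bar p:S\tilde M\to SM$ is a covering map (so that $S\tilde M$ is the universal cover of $SM$), and then bounds the cardinality of the fibres $\bar p^{-1}(y)\subseteq\bigcup_{\tilde x\in p^{-1}(x)}S_{\tilde x}\tilde M$ using the finiteness of $p^{-1}(x)$ established in the proof of Theorem \ref{thm2}. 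Your version is shorter and yields the sharper statement $\pi_1(SM)\cong\pi_1(M)$, while avoiding the need to verify that $\bar p$ is a covering map; the paper's version has the minor advantage of exhibiting the universal cover of $SM$ explicitly and of reusing the fibre count already carried out for $p$. Your closing step (finite $\pi_1$ implies finite $H_1(\,\cdot\,;\mathbb{Z})$, hence $\operatorname{Hom}(H_1,\mathbb{R})=0$ and $H^1_{\mathrm{dR}}(SM)=0$) is standard and correct, and your identification of where the hypotheses $n\geq 3$ and strong convexity enter is accurate.
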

\begin{proof}
Let $ \tilde M $ be the universal covering manifold of $ M $ with the smooth covering map $ p:\tilde M\longrightarrow M $. It is well known that the homotopy sequence of the fibre (sphere) bundle $(S\tilde M,\tilde\pi,\tilde M,S^{n-1})$, namely
\begin{align}\label{sequence}
\cdots\longrightarrow\pi_1(S^{n-1})\longrightarrow\pi_1(S\tilde M)\longrightarrow\pi_1(\tilde M)\longrightarrow\cdots,\end{align}
is exact. Since $\tilde M$ is simply connected, $\pi_1(\tilde M)=0$. We know that $\pi_1(S^{n-1})=0$. Thus  (\ref{sequence}) implies  $\pi_1(S\tilde M)=0$. One can easily check that $\bar p:S\tilde M\longrightarrow SM $ is a smooth covering map.
 Therefore, $ S\tilde M $ is the universal covering manifold of $ SM $. For every section $y$ of  $ SM$, there is a point $x\in M$ such that $y\in S_xM$.
 According to the proof of Theorem \ref{thm2}, $p^{-1}(x)$ is a finite set and consequently,  ${\bar p}^{-1}(y)\subseteq\bigcup\limits_{\tilde x\in{p}^{-1}(x)}S_{\tilde x}\tilde M$  is compact and being discrete is finite. Thus the fundamental group $\pi_1(SM)$ is finite and therefore $H^1_{\mathrm{dR}}(SM)=0$.
\end{proof}
{\bf Acknowledgment}\\
The second author would like to thank the IPM School of Mathematics for a partial support.

\date{ \small Behroz Bidabad and Mohamad Yar Ahmadi\\
Amirkabir University of Technology (Tehran Polytechnic)\\
424 Hafez Ave. 15914 Tehran, Iran.\\
E-mails: bidabad@aut.ac.ir; m.yarahmadi@aut.ac.ir}
\end{document}